\newtheorem{theorem}{Theorem}[section]
\newtheorem{conjecture}{Conjecture}[section]
\newtheorem{corollary}[theorem]{Corollary}
\theoremstyle{remark}
\newcommand{\bsp}{\begin{split}}
\newcommand{\esp}{\end{split}}
\newcommand{\be}{\begin{equation}}
\newcommand{\ee}{\end{equation}}
\newcommand{\bes}{\begin{equation*}}
\newcommand{\ees}{\end{equation*}}
\newcommand{\bv}\boldsymbol{}
\numberwithin{equation}{section}
\renewcommand{\mod}[1]{~{\rm mod}\,#1}
\begin{document}
\title{A Lower Bound For Biases Amongst Products Of Two Primes}
\author{Patrick Hough}
\address{Department of Mathematics\\
University College London\\
Gower Street\\
London, WC1E  6BT\\
England}
\email{{\tt patrick.hough.12@ucl.ac.uk}}
\keywords{Primes in arithmetic progressions, Prime races}
\thanks{This paper comes as a result of my MSc dissertation at University College London. I give greatest thanks to my supervisor Prof. Andrew Granville. I also thank Ronnie George and Tarquin Grossman for their general exuberance.}
\date{\today}

\begin{abstract} We establish a conjectured result of Dummit, Granville and Kisilevsky for the maximum bias in P$2$ races, which compare the number of P$2's\leq x$ whose prime factors, evaluated at a given quadratic character, take specific values. 
\end{abstract}

\maketitle

%\setcounter{tocdepth}{1}
%\tableofcontents

\section{Introduction} 
In $2015$, Dummit, Granville and Kisilevsky~\cite{DGK} showed that significantly more than a quarter of all odd integers of the form $pq$ up to $x$,  with $p$ and $q$ both prime, satisfy $p\equiv q\equiv 3\mod{4}$. More generally they proved the following Theorem. 
\begin{theorem}[Dummit, Granville and Kisilevsky, 2016]\label{DGK} Let $\chi$ be a quadratic character of conductor $d$. For $\eta=-1$ or $1$ we have \be\frac{\#\{pq\leq x:\:\chi_d(p)=\chi_d(q)=\eta\}}{\frac{1}{4}\#\{pq\leq x: \: (pq,d)=1\}}=1+\eta\frac{(\mathcal{L}_{\chi_d}+o(1))}{\log{\log{x}}},\:\:\:\text{ where } \mathcal{L}_{\chi_d}:=\sum_{p}\frac{\chi_d(p)}{p}.\ee
\end{theorem}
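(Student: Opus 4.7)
My plan is to expand the bias condition via the identity $\mathbf{1}[\chi_d(p)=\eta]=(1+\eta\chi_d(p))/2$ (valid whenever $p\nmid d$) and then sort the resulting pieces by their orders of magnitude. Let all sums below range over ordered pairs $(p,q)$ of primes with $pq\leq x$ and $(pq,d)=1$. Setting $N(x):=\sum_{(p,q)}1$, $A(x):=\sum_{(p,q)}\chi_d(p)$ and $B(x):=\sum_{(p,q)}\chi_d(pq)$, expansion of the indicator product yields
\[
4\,\#\{(p,q):\chi_d(p)=\chi_d(q)=\eta\}=N(x)+2\eta A(x)+B(x),
\]
the factor $2$ in front of $\eta A(x)$ coming from the $p\leftrightarrow q$ symmetry. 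Passing from ordered pairs to products $pq$ merely inserts a global factor of $2$ in both numerator and denominator, together with an $O(\sqrt x/\log x)$ diagonal error, so the ratio is unaffected. Landau's classical asymptotic supplies $N(x)\sim 2x\log\log x/\log x$, and the theorem therefore reduces to establishing $(2\eta A(x)+B(x))/N(x)=\eta(\mathcal{L}_{\chi_d}+o(1))/\log\log x$.

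For the quadratic term I would write $B(x)=\sum_p\chi_d(p)\,\pi_{\chi_d}(x/p)$, where $\pi_{\chi_d}(y):=\sum_{q\leq y}\chi_d(q)$, and apply the Siegel--Walfisz estimate $\pi_{\chi_d}(y)\ll y\exp(-c\sqrt{\log y})$. Splitting the outer sum at $p=\sqrt x$ (and exchanging the roles of $p$ and $q$ on the complementary range) gives $B(x)\ll x\exp(-c'\sqrt{\log x})$, which is negligible next to $x/\log x$.

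The central computation is the asymptotic $A(x)\sim\mathcal{L}_{\chi_d}\cdot x/\log x$. Writing $A(x)=\sum_p\chi_d(p)\pi(x/p)$ and splitting at $p=x^{1-\delta}$ for some $\delta=\delta(x)\to 0$ tending to zero slowly, one inserts $\pi(x/p)=(x/p)/\log(x/p)+O((x/p)/\log^2(x/p))$ on the main range to extract the dominant contribution
\[
\frac{x}{\log x}\sum_{p\leq x^{1-\delta}}\frac{\chi_d(p)}{p}\cdot\frac{\log x}{\log(x/p)}.
\]
Since $\sum_p\chi_d(p)/p$ converges to $\mathcal{L}_{\chi_d}$ (a consequence of $L(1,\chi_d)\neq 0$ combined with Siegel--Walfisz), partial summation allows the weight $\log x/\log(x/p)$, which tends to $1$ whenever $\log p/\log x\to 0$, to be replaced by $1+o(1)$ throughout this range. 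The tail $p>x^{1-\delta}$ is handled by reversing the order of summation and applying Siegel--Walfisz to the inner sum $\sum_{p\leq x/q}\chi_d(p)$, contributing an error $\ll x/\log^A x$ for any fixed $A$. The main obstacle is precisely this non-uniform weight: it blows up as $p\to x$, and because $\sum_p\chi_d(p)/p$ converges only conditionally the endpoint regime cannot be controlled by absolute bounds; the cancellation afforded by Siegel--Walfisz in that regime, together with a careful choice of $\delta$, is what ultimately makes the plan go through.
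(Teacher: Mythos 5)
Your proposal is correct and follows essentially the same route as the paper: expand $(1+\eta\chi_d(p))(1+\eta\chi_d(q))$, use Landau's asymptotic for the denominator, kill the $\chi_d(pq)$ term and the large-$p$ tail by Siegel--Walfisz-type cancellation in $\sum_{p\le y}\chi_d(p)$, and extract the main term $\mathcal{L}_{\chi_d}\,x/\log x$ from the linear term via partial summation. The only cosmetic difference is that the paper runs the hyperbola method with the split at $\sqrt{x}$ and works with $Li(x/a)$, whereas you split at $x^{1-\delta}$ and insert $\pi(x/p)\approx (x/p)/\log(x/p)$ directly; both handle the non-uniform weight $\log x/\log(x/p)$ in the same way, by exploiting cancellation rather than absolute bounds.
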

We focus on just how large this bias can get if we restrict the conductor of our quadratic character by $d\leq x$. The following prediction was made in relation to this problem.
\begin{conjecture}[Dummit, Granville and Kisilevsky, 2016]
There exists $d\leq x$ such that 
\[\label{eq:DGK}\frac{\#\{pq\leq x:\:\chi_d(p)=\chi_d(q)=\eta\}}{\frac{1}{4}\#\{pq\leq x: \: (pq,d)=1\}} \text{ is as large as }1+\frac{\log\log\log{x}+O(1)}{\log\log{x}}.\] 
\end{conjecture}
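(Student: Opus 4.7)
The plan is to invoke Theorem \ref{DGK}, which reduces the conjecture to exhibiting a fundamental discriminant $d\le x$ satisfying $\eta\mathcal{L}_{\chi_d}\ge\log\log\log x+O(1)$. I take $\eta=1$; the case $\eta=-1$ is symmetric (twist by $-4$ or similar so that $\chi_d$ pretends to be $-1$ rather than $+1$ on small primes). The goal is then to construct $d$ so that $\chi_d(p)=+1$ for every prime $p\le y$, where $y$ is chosen large enough to yield the required main term via Mertens, yet small enough that enough such $d\le x$ exist.

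Concretely, fix $y=A\log x$ with a small constant $A<1/\log 2$. Via quadratic reciprocity, each condition $\chi_d(p)=+1$ becomes a congruence on $d$ modulo $p$ (or modulo $8$ at $p=2$), admitting $(p-1)/2$ of the coprime residue classes. The Chinese Remainder Theorem then produces a family $\mathcal{F}$ of squarefree $d\le x$ satisfying all these conditions simultaneously, of size
\bes|\mathcal{F}|\gg x\prod_{p\le y}\frac{p-1}{2p}\asymp\frac{x}{2^{\pi(y)}\log y}=x^{1-o(1)};\ees
restricting further to fundamental discriminants costs only a positive-density factor. For any $d\in\mathcal{F}$, Mertens yields $\sum_{p\le y}\chi_d(p)/p=\log\log y+M+o(1)=\log\log\log x+O(1)$, already the size we need.

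The remaining task is to show that the tail $\sum_{p>y}\chi_d(p)/p$ is $O(1)$ for at least one $d\in\mathcal{F}$. This is done by a second-moment estimate: for $p,p'>y$ the defining conditions on $\mathcal{F}$ are independent of $d\pmod p$ and $d\pmod{pp'}$, so both $\chi_d(p)$ and $\chi_d(p)\chi_d(p')$ average to essentially zero over $\mathcal{F}$, giving
\bes\frac{1}{|\mathcal{F}|}\sum_{d\in\mathcal{F}}\Bigl(\sum_{y<p\le z}\frac{\chi_d(p)}{p}\Bigr)^2\ll\sum_{y<p\le z}\frac{1}{p^2}+\text{error}\ll\frac{1}{y\log y}.\ees
By Chebyshev all but a $o(1)$-fraction of $d\in\mathcal{F}$ have bounded truncated tail; the contribution of $p>z$ is absorbed by a quadratic large-sieve bound of Heath--Brown type.

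The genuinely delicate part is this tail estimate — producing a single $d\in\mathcal{F}$ on which $\sum_{p>y}\chi_d(p)/p$ is $O(1)$ uniformly, and ruling out pathological behaviour (e.g.\ from Siegel-type zeros of $L(s,\chi_d)$) across the whole family. The CRT construction and the Mertens main term are entirely classical.
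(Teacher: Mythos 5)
The decisive gap is in your very first step. You reduce the conjecture to exhibiting $d\le x$ with $\eta\mathcal{L}_{\chi_d}\ge \log\log\log x+O(1)$ by ``invoking Theorem \ref{DGK}'', but Theorem \ref{DGK} is an asymptotic statement as $x\to\infty$ with the conductor $d$ fixed (or growing very slowly): the $o(1)$ there is not uniform in $d$. Any $d$ with $|\mathcal{L}_{\chi_d}|\ge\log\log\log x$ must itself grow with $x$ (for fixed $d$ the quantity $\mathcal{L}_{\chi_d}$ is a constant), so the theorem as stated does not apply to the discriminants you construct --- and yours have size $x^{1-o(1)}$, the worst possible range. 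The paper flags exactly this point in the introduction and spends essentially all of Section 2 proving a uniform version of the asymptotic, namely \eqref{step}, valid for $d\le\exp[C(\log x)^{1/2}]$ provided $L(s,\chi_d)$ has no Siegel zero; the inputs are Page's theorem (Theorem \ref{thm:page}), the classical zero-free region giving $\theta(t,\chi)\ll t e^{-c(\log t)^{1/2}}$, and a partial-summation argument split at $t=x_d$. Unconditionally nothing of this sort is available for $d$ as large as $x^{1-o(1)}$ (the paper needs GRH to reach $d\le x$ in Theorem \ref{thm:Hough2}), so even if your construction of $d$ were completed, the reduction would fail. Note that restricting to $d\le\exp[C(\log x)^{1/2}]$ costs nothing in the conclusion, since $\log\log\log D=\log\log\log x+O(1)$ already in that range; this is why the paper can afford to work with such small conductors, and why you should too.

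On the second half --- producing $d$ with $\mathcal{L}_{\chi_d}$ large --- your CRT construction forcing $\chi_d(p)=+1$ for all $p\le y$ plus Mertens is a genuinely different route from the paper's, which instead quotes Granville--Soundararajan's Theorem 1 on the distribution of $L(1,\chi_d)$ (Theorem \ref{mod1}) and converts to $\mathcal{L}_{\chi_d}$ via the identity \eqref{curlyL}, where $E(\chi)$ is bounded between absolute constants. Your route is close in spirit to the constructions underlying the GS result and could plausibly be completed, but as written the tail bound $\sum_{p>y}\chi_d(p)/p=O(1)$ is only sketched, and the second-moment plus large-sieve argument you gesture at is the hard part rather than a routine step; the paper sidesteps it entirely by outsourcing to GS. The Siegel-zero issue, which you mention only in passing, is handled cleanly by Page's theorem: at most one modulus $d\le D$ is exceptional, so discarding it costs nothing against $D^{1-\epsilon}$ admissible discriminants.
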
\label{DGKCon}
It is important to note that in proving such a claim, one must prove a uniform version of Theorem \ref{DGK} since the proof there assumes that $x$ is allowed to be very large compared to $d$. Whilst this is not a straightforward task, once achieved, all that remains is to show that $\mathcal{L}_{\chi_d}$ can be found suitably large for $d\leq x$. It turns out that this final step is easily accomplished using a slight adaptation of a result from Granville and Soundararajan's work on extremal values of $L(1,\chi)$~\cite{GS}. \\\\ In this paper, we prove Conjecture \ref{DGKCon} and indeed give a slightly stronger result in the form of the following Theorem.
\begin{theorem}\label{hough1}
Fix $\epsilon>0$. For large $x$ there exist at least $D(x)^{1-\epsilon}$ integers $d\leq D(x)$ for which 
\[\frac{\#\{pq\leq x:\:\chi_{d}(p)=\chi_{d}(q)=\eta\}}{\frac{1}{4}\#\{pq\leq x:\:(pq,{d})=1\}}\:\:\:\text{ is at least as large as }\:\:\:1+\frac{\log\log\log{x}+O(1)}{\log\log{x}}.\] 
Here, $D(x)=\exp[C(\log{x})^{1/2}]$.
\end{theorem}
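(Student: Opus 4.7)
The plan follows the two-stage strategy articulated in the introduction: first establish a uniform version of Theorem~\ref{DGK} valid for moduli $d\leq D(x)$; then exhibit a large family of such $d$ with near-extremal $\mathcal{L}_{\chi_d}$.

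For the uniform Theorem~\ref{DGK} I would begin with the expansion
\[
\mathbf{1}\{\chi_d(p)=\chi_d(q)=\eta\}=\tfrac14\bigl(1+\eta\chi_d(p)\bigr)\bigl(1+\eta\chi_d(q)\bigr)
\]
and sum over coprime pairs, decomposing the count into the total $N_d(x)=\#\{pq\leq x:(pq,d)=1\}$, a cross term $S_d(x)=\sum_{pq\leq x}\chi_d(p)$, and a bilinear term $T_d(x)=\sum_{pq\leq x}\chi_d(p)\chi_d(q)$. Landau's asymptotic gives $N_d(x)\sim x\log\log x/\log x$ with a routine coprimality correction. Writing $S_d(x)=\sum_p\chi_d(p)\pi(x/p)$ and applying partial summation reduces its estimate to uniform control of the partial sums $\sum_{p\leq T}\chi_d(p)/p$, and a Mertens-style computation then produces $S_d(x)/N_d(x)\sim 2\mathcal{L}_{\chi_d}/\log\log x$. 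The bilinear $T_d(x)$ is of smaller order once uniform prime-in-arithmetic-progression estimates are in place. Assembling these three ingredients recovers the bias asymptotic, the essential new requirement being that every error term is $o(1/\log\log x)$ \emph{uniformly} in $d\leq D(x)=\exp[C(\log x)^{1/2}]$.

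For the second stage I would appeal directly to~\cite{GS}: their construction of near-extremal $L(1,\chi_d)$ admits a minor modification that forces the conductor into the range $d\leq D(x)$ and produces at least $D(x)^{1-\epsilon}$ fundamental discriminants $d$ in this range with $L(1,\chi_d)\gg\log\log D(x)$. Since $\mathcal{L}_{\chi_d}=\log L(1,\chi_d)+O(1)$ for quadratic characters and $\log\log D(x)=\tfrac12\log\log x+O(1)$, each such $d$ satisfies
\[
\mathcal{L}_{\chi_d}\;\geq\;\log\log\log D(x)+O(1)\;=\;\log\log\log x+O(1),
\]
which upon substitution into the uniform Theorem~\ref{DGK} yields the claimed bias.

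The principal obstacle is the uniformity in stage one: obtaining asymptotics for $\sum_{p\leq T}\chi_d(p)/p$ with error $o(1/\log\log x)$ for $T$ as large as $x$ and $d$ as large as $\exp[C(\log x)^{1/2}]$ lies well beyond the unconditional Siegel--Walfisz range. My approach would be to combine the classical zero-free region for $L(s,\chi_d)$ (which does reach this scale away from the sparse set of moduli admitting a Siegel zero) with a Selberg--Delange-style contour integration. The exceptional set of Siegel moduli is of size $D(x)^{o(1)}$ and so can be harmlessly discarded from the $D(x)^{1-\epsilon}$ discriminants produced in stage two; the remaining manipulations reduce to standard partial summation and Mertens-type bookkeeping.
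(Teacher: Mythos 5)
Your proposal is correct and follows essentially the same route as the paper: the identity $\tfrac14(1+\eta\chi_d(p))(1+\eta\chi_d(q))$ with Landau's asymptotic for the denominator, reduction of the cross and bilinear terms to prime character sums controlled via the classical zero-free region for $d\leq\exp[C(\log x)^{1/2}]$ (discarding the at most one Siegel modulus guaranteed by Page's theorem), and Theorem $1$ of Granville--Soundararajan to produce $D(x)^{1-\epsilon}$ fundamental discriminants with $L(1,\chi_d)\gg\log\log D(x)$ and hence $\mathcal{L}_{\chi_d}\geq\log\log\log x+O(1)$. The only discrepancy is a constant-factor bookkeeping slip in the claimed asymptotic for $S_d(x)/N_d(x)$, which is harmless for the lower bound since it is absorbed into the $O(1)$.
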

In $1919$, Littlewood gave an upper bound on the value of $L(1,\chi)$ as follows. 
\begin{theorem}[Littlewood, 1919]\label{Littlewood}
Assume the Generalised Riemann Hypothesis (GRH). For any non-principal primitive character $\chi$ (mod $q$), one has 
\[|L(1,\chi)|\leq (2e^{\gamma}+o(1))\log\log{q}.\]
\end{theorem}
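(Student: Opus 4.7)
The plan is to start from the Euler product identity
\[ \log L(s,\chi) \;=\; \sum_{n} \frac{\Lambda(n)\chi(n)}{n^{s}\log n}, \qquad \mathrm{Re}(s) > 1, \]
and push the corresponding representation down to $s=1$ using GRH. Concretely, a truncated Perron formula for this Dirichlet series yields
\[ \sum_{n \leq y} \frac{\Lambda(n)\chi(n)}{n \log n} \;=\; \frac{1}{2\pi i}\int_{c-iT}^{c+iT} \log L(1+s,\chi) \, \frac{y^{s}}{s}\, ds \;+\; (\text{Perron error}), \]
and under GRH the integrand is holomorphic in $\mathrm{Re}(s) > -\tfrac{1}{2}$. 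Shifting the contour to $\mathrm{Re}(s) = -\tfrac{1}{2}+\delta$ picks up a simple pole at $s = 0$ with residue $\log L(1,\chi)$, producing the central identity
\[ \log L(1,\chi) \;=\; \sum_{n \leq y} \frac{\Lambda(n)\chi(n)}{n \log n} \;+\; E(y,\chi). \]

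The main obstacle is bounding $E(y,\chi)$, which reduces to controlling $\log L(1+s,\chi)$ on the shifted contour. Standard GRH estimates (via the Hadamard factorisation over the non-trivial zeros, or equivalently Borel--Carath\'{e}odory applied in the zero-free half-plane) give $\log L(1+s,\chi) \ll \log q \cdot \log(|t|+2)$ uniformly on such lines, and after balancing the height $T$ this produces
\[ E(y,\chi) \;\ll\; \frac{\log q \cdot \log y}{\sqrt{y}} \;+\; \frac{1}{\log y}. \]
This is the heart of the argument: without GRH no such square-root type saving is available and the subsequent optimisation collapses.

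Next I would bound the truncated sum trivially by dropping $\chi$: since $|\chi(n)|\leq 1$,
\[ \left| \sum_{n \leq y} \frac{\Lambda(n)\chi(n)}{n \log n} \right| \;\leq\; \sum_{n \leq y} \frac{\Lambda(n)}{n \log n} \;=\; \log\log y \;+\; \gamma \;+\; o(1), \]
the identity on the right following from Mertens' theorem $\sum_{p\leq y}1/p = \log\log y + M + o(1)$ combined with the convergent contribution $\sum_{p}\sum_{k\geq 2}1/(kp^{k}) = \gamma - M$ from the higher prime powers.

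Finally I would optimise by taking $y = (\log q)^{2+\epsilon}$ for arbitrarily small $\epsilon>0$: both terms in $E(y,\chi)$ become $o(1)$, while $\log\log y = \log(2+\epsilon) + \log\log\log q$. Combining gives
\[ \log|L(1,\chi)| \;\leq\; \log(2+\epsilon) + \log\log\log q + \gamma + o(1), \]
and exponentiating then letting $\epsilon \downarrow 0$ recovers Littlewood's bound $|L(1,\chi)| \leq (2 e^{\gamma} + o(1))\log\log q$. The factor $2$ is precisely the one forced by the smallest admissible value of $y$ in the GRH estimate; any weakening of the bound on $E(y,\chi)$ would force $y$ larger and degrade this constant.
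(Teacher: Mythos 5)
The paper does not actually prove Theorem \ref{Littlewood}: it is quoted as a classical result of Littlewood (1919) and used as a black box in the proof of Theorem \ref{thm:Hough2}, so there is no in-paper argument to compare yours against. Your proposal is, in outline, the standard proof of Littlewood's bound and it is essentially correct: the representation $\log L(1,\chi)=\sum_{n\le y}\Lambda(n)\chi(n)/(n\log n)+E(y,\chi)$ under GRH, the trivial bound of the truncated sum by $\log\log y+\gamma+o(1)$ via Mertens, and the choice $y=(\log q)^{2+\epsilon}$ are exactly the right ingredients, and your diagnosis of where the constant $2$ comes from is accurate. The one step you should not wave at is the claim $E(y,\chi)\ll(\log q\cdot\log y)/\sqrt y$ with a \emph{single} power of $\log q$: if one instead estimates the tail $\sum_{n>y}\Lambda(n)\chi(n)/(n\log n)$ by partial summation against the GRH bound $\psi(t,\chi)\ll\sqrt t\,(\log qt)^2$, one only gets $E(y,\chi)\ll(\log qy)^2/\sqrt y$, which forces $y>(\log q)^{4+\epsilon}$ and degrades the constant to $4e^{\gamma}$. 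Your contour-shift route does recover the single power of $\log q$, but only because on the line $\mathrm{Re}(s)=-\tfrac12+\delta$ one has $|\log L(1+s,\chi)|\ll_{\delta}\log\bigl(q(|t|+2)\bigr)$ (Borel--Carath\'eodory applied in the GRH zero-free half-plane), so that the vertical integral contributes $\ll y^{-1/2+\delta}\bigl(\log q\cdot\log T+(\log T)^2\bigr)$; you must then take $\delta$ small compared with $\epsilon$ and observe that the resulting loss of $y^{\delta}$ is harmless. With that made explicit, and with the final ``let $\epsilon\downarrow 0$'' replaced by a slowly decaying $\epsilon=\epsilon(q)$ so that $(2+\epsilon)e^{\gamma}$ genuinely becomes $2e^{\gamma}+o(1)$, your argument is complete.
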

This upper bound, when combined with a lower bound derived from~\cite{GS}, allows us to claim that, under GRH, we have found the maximum bias amongst these races. 
\begin{theorem}\label{thm:Hough2}
\label{max} Assume GRH. Then
\[\max_{d\leq x}\left|\frac{\#\{pq\leq x:\:\chi_{d}(p)=\chi_{d}(q)=\eta\}}{\frac{1}{4}\#\{pq\leq x:\:(pq,{d})=1\}}-1\right|\sim\frac{\log\log\log{x}}{\log\log{x}}.\]
\end{theorem}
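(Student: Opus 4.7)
The plan is to reduce the maximum in the statement to the extremal problem $\max_{d\leq x}|\mathcal{L}_{\chi_d}|$, and then to pin down this extremal behaviour from above via the Littlewood bound (Theorem \ref{Littlewood}) and from below via Theorem \ref{hough1}.

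First I would invoke the uniform version of Theorem \ref{DGK} that also underlies Theorem \ref{hough1}: taking absolute values and using that the $o(1)$ there is genuinely uniform in $d\leq x$, one gets
\[
\left|\frac{\#\{pq\leq x:\chi_d(p)=\chi_d(q)=\eta\}}{\frac{1}{4}\#\{pq\leq x:(pq,d)=1\}}-1\right|=\frac{|\mathcal{L}_{\chi_d}|+o(1)}{\log\log x},
\]
so Theorem \ref{thm:Hough2} follows once I show $\max_{d\leq x}|\mathcal{L}_{\chi_d}|\sim\log\log\log x$.

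For the upper bound, I would use the standard Mertens-type identity
\[
\mathcal{L}_{\chi_d}=\log L(1,\chi_d)-\sum_p\sum_{k\geq 2}\frac{\chi_d(p^k)}{kp^k}=\log L(1,\chi_d)+O(1),
\]
valid because the tail double sum is bounded in absolute value by $\sum_p 1/(p(p-1))$. Under GRH, Theorem \ref{Littlewood} gives $L(1,\chi_d)\leq(2e^\gamma+o(1))\log\log d$, and Littlewood's companion lower bound $L(1,\chi_d)\gg 1/\log\log d$ (also GRH-conditional) similarly controls $|\log L(1,\chi_d)|$ from the other side. Together these yield $|\mathcal{L}_{\chi_d}|\leq\log\log\log x+O(1)$ uniformly in $d\leq x$. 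The matching lower bound is already contained in Theorem \ref{hough1}, which exhibits many $d\leq D(x)\leq x$ with $|\mathcal{L}_{\chi_d}|\geq\log\log\log x+O(1)$; the parameter $\eta$ is absorbed by choosing the sign of the Granville--Soundararajan construction. Dividing both bounds by $\log\log x$ gives the claimed asymptotic.

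The main obstacle, as flagged in the discussion preceding Theorem \ref{hough1}, is securing the uniform version of Theorem \ref{DGK} with error terms good enough that $\mathcal{L}_{\chi_d}/\log\log x$ remains the dominant term across the entire range $d\leq x$, where $d$ may be nearly as large as $x$. Once that uniformity is in hand, the upper and lower halves above are routine consequences of the GRH-conditional bounds on $L(1,\chi)$ and of the extremal construction already used for Theorem \ref{hough1}.
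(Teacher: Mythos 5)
Your proposal follows essentially the same route as the paper: establish the uniform asymptotic $1+(\mathcal{L}_{\chi_d}+O(1))/\log\log x$ for all $d\leq x$ under GRH, then bound $\max_{d\leq x}\mathcal{L}_{\chi_d}$ from above by Littlewood's theorem via $\mathcal{L}_{\chi_d}=\log L(1,\chi_d)+O(1)$ and from below by the Granville--Soundararajan extremal construction. If anything you are slightly more careful than the paper, which only controls $\mathcal{L}_{\chi_d}$ from above and does not explicitly invoke the companion GRH bound $L(1,\chi_d)\gg 1/\log\log d$ needed to rule out a very negative $\mathcal{L}_{\chi_d}$ dominating the absolute value.
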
\vspace{+2ex}
Using a second derived result from~\cite{GS}, the initial legwork of proving a uniform version of Theorem \ref{DGK} allows us to show just how \emph{small} this bias can get. 
\begin{theorem}\label{thm:Hough3}
Fix $\epsilon>0$. For large $x$ there exist at least $D(x)^{1-\epsilon}$ integers $d\leq D(x)$ for which 
\[\frac{\#\{pq\leq x:\:\chi_{d}(p)=\chi_{d}(q)=\eta\}}{\frac{1}{4}\#\{pq\leq x:\:(pq,{d})=1\}}\:\:\:\text{ is at least as small as }\:\:\:1-\frac{\log\log\log{x}+O(1)}{\log\log{x}}.\] Here, $D(x)=\exp[C(\log{x})^{1/2}]$.
\end{theorem}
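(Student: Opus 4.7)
The plan is to parallel the proof of Theorem \ref{hough1}, substituting the large-values input from Granville--Soundararajan with the analogous small-values input. The uniform version of Theorem \ref{DGK} --- already established in proving Theorem \ref{hough1} --- gives, for every $d\leq D(x)$,
\begin{equation*}
\frac{\#\{pq\leq x:\:\chi_{d}(p)=\chi_{d}(q)=\eta\}}{\frac{1}{4}\#\{pq\leq x:\:(pq,{d})=1\}} = 1 + \eta\,\frac{\mathcal{L}_{\chi_d}+o(1)}{\log\log x}.
\end{equation*}
Hence the bias is $\leq 1-\frac{\log\log\log x+O(1)}{\log\log x}$ exactly when $\eta\,\mathcal{L}_{\chi_d}\leq -\log\log\log x+O(1)$, so the task reduces to exhibiting $D(x)^{1-\epsilon}$ fundamental discriminants $d\leq D(x)$ realising this inequality.

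For $\eta=-1$ the required condition $\mathcal{L}_{\chi_d}\geq \log\log\log x+O(1)$ is precisely the output from~\cite{GS} already invoked for Theorem \ref{hough1}. For $\eta=+1$ I would appeal to the mirror construction in the same paper, producing $D(x)^{1-\epsilon}$ fundamental discriminants $d\leq D(x)$ for which $\chi_d(p)=-1$ for every prime $p\leq y$, with $y$ chosen so that $\log\log y=\log\log\log x+O(1)$. For such $d$ the small-prime contribution is $\sum_{p\leq y}\chi_d(p)/p = -\sum_{p\leq y}1/p = -\log\log y+O(1)$, while the tail $\sum_{p>y}\chi_d(p)/p$ is $O(1)$ by partial summation against Polya--Vinogradov (or Burgess) character-sum estimates. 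This gives $\mathcal{L}_{\chi_d}\leq -\log\log\log x+O(1)$, as required; the transition between $\log L(1,\chi_d)$ and $\mathcal{L}_{\chi_d}$ costs only the usual $O(1)$ from the convergent prime-square sum, absorbed in the error.

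The main obstacle is verifying that the Granville--Soundararajan construction producing extremal \emph{small} $L(1,\chi)$ --- as opposed to the large-values analogue already used for Theorem \ref{hough1} --- still yields $D(x)^{1-\epsilon}$ admissible discriminants after the quadratic-residue constraints at all primes $p\leq y$ have been imposed. One cannot simply mirror the large-values argument by flipping signs, since the pool of fundamental discriminants realising a prescribed sign pattern is sensitive to quadratic reciprocity and to the shape of $d$; however the sieve/CRT count in~\cite{GS}, adapted to impose $\chi_d(p)=-1$ rather than $\chi_d(p)=+1$ on small primes, delivers the requisite number of $d$ within $[1,D(x)]$. Once this is granted, the remainder is bookkeeping: insert each such $d$ into the uniform asymptotic for Theorem \ref{DGK} and read off the claimed bound.
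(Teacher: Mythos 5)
Your reduction is the right one and matches the paper's: combine the uniform asymptotic with a supply of $D(x)^{1-\epsilon}$ discriminants $d\leq D(x)$ for which $\eta\,\mathcal{L}_{\chi_d}\leq -\log\log\log x+O(1)$. (Two small caveats there: the uniform asymptotic is not available for \emph{every} $d\leq D(x)$ but only for those whose $L(s,\chi_d)$ has no Siegel zero, which is harmless by Page's theorem but should be said; and the paper's error term is $O(1)$, not $o(1)$.) The genuine gap is in how you propose to produce the discriminants with $\mathcal{L}_{\chi_d}$ very negative. You prescribe $\chi_d(p)=-1$ for all primes $p\leq y$ and then claim the tail $\sum_{p>y}\chi_d(p)/p$ is $O(1)$ ``by partial summation against Polya--Vinogradov (or Burgess).'' Those estimates bound character sums over consecutive \emph{integers}; they say nothing about $\sum_{p\leq t}\chi_d(p)$, and extracting primes from a complete character sum destroys the cancellation. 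Controlling $\sum_{p>y}\chi_d(p)/p$ for an individual $d$ with $y$ as small as $\log\log x$ is essentially equivalent to zero-free-region information for $L(s,\chi_d)$ --- indeed the whole point of the identity \eqref{curlyL} is that this tail being $O(1)$ is \emph{equivalent} to $L(1,\chi_d)\ll 1/\log\log D$, which is the thing you are trying to prove. So as written the $\eta=+1$ case is circular/unsupported, and you yourself flag (without resolving) whether the sign-pattern construction even yields $D^{1-\epsilon}$ admissible discriminants.

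The paper sidesteps all of this. It does not construct sign patterns at all: it quotes Theorem 1 of \cite{GS} a second time, for the \emph{lower} tail, obtaining that the proportion $\Psi_D(\tau)$ of fundamental discriminants $d\leq D$ with $L(1,\chi_d)\leq \pi^2/(6e^{\gamma}\tau)$, at $\tau=\log_2 D-2\log_3 D$, is $\gg D^{-\epsilon}$ (Theorem \ref{mod2}). The conversion to $\mathcal{L}_{\chi_d}\leq -\log_3 D+O(1)$ is then immediate from \eqref{curlyL}, since $\mathcal{L}_{\chi}=\log L(1,\chi)+E(\chi)$ with $E(\chi)$ absolutely bounded; the single possible Siegel-zero modulus is discarded via Page. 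If you want to repair your argument, replace the Polya--Vinogradov tail bound by this appeal to the lower-tail distribution result of \cite{GS} (or reproduce their moment argument showing the truncated Euler product approximates $L(1,\chi_d)$ for almost all $d$ --- an average statement over $d$, not an individual one); the rest of your bookkeeping then goes through.
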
\vspace{+2ex}
Finally, we conducted a computational search for conductors $d$ that give rise to particularly biased prime races of this kind. In order to find these large biases, is it crucial to study the quantity $\mathcal{L}_{\chi_d}$. Indeed, if $\chi_d(p)=-1$ for a large proportion of the small primes $p$ or alternatively $\chi_d(p)=+1$, then we expect to have a prime race with a correspondingly large bias. Such characters are closely linked to prime generating polynomials and it was the work of Jacobson Jr. and Williams that gave us the best results in this area~\cite{JW}. We are thus able to give, what we believe to be, the lowest known value of $\mathcal{L}_{\chi_d}$ where the conductor $d$ of our character is \[133007243922787512412600341028518035429251391005992761399935498154029253, \]
and $\mathcal{L}_{(d/\cdot)}\approx -2.1108$. In the words of Fiorilli and Martin~\cite{FM}, this conductor gives rise to the `most unfair' known such prime race. Moreover, the corresponding value of $L(1,\chi_d)=0.144$, which is so important in determining $\mathcal{L}_{\chi_d}$, is the lowest calculated for a real Dirichlet character $\chi$. Indeed, extremely small values of $L(1,\chi)$ have many links with prime generating polynomials. 

\section{Proof Of Theorem \ref{hough1}} 
We begin by defining the notion of a Siegel zero for a Dirichlet $L$-function $L(s,\chi)$ associated with the real Dirichlet character $\chi$ of modulus $q$. A zero of $L(s,\chi)$ is called a Siegel zero if, for some suitable positive constant $c$, $L(s,\chi)$ has a real zero $\beta$ such that \[1-\frac{c}{\log{q}}\leq \beta\leq 1.\]

As mentioned, a proof of Theorem \ref{hough1} requires that we give a uniform version of Theorem \ref{DGK}. Moreover, we need that \eqref{eq:DGK} holds for $d\leq D(x):=\exp[C(\log{x})^{1/2}]$. We focus on the case in which $\eta=1$ since the case $\eta=-1$ is tackled using much the same reasoning. Establishing this result, assuming $L(s,\chi_d)$ has no Siegel zero, will make up the majority of the proof. To complete the proof we show that, for large $x$, there exist many $d\leq D(x)$ corresponding to real Dirichlet characters, such that \be\label{Lchi} \mathcal{L}_{\chi_d}\geq \log\log\log{x}+O(1), \ee where $L(s,\chi_d)$ has no Siegel zero. The following application of Theorem $1$ from~\cite{GS} is sufficient in order to make this last step. Throughout this paper we will denote the $j$-fold iterated logarithm by $\log_j$ where appropriate.
\begin{theorem}[Application of Theorem $1$ From~\cite{GS}]\label{mod1}
Let $D$ be large and $\tau=\log_2(D)-2\log_3(D)$. Then \[\Phi_D(\tau)\gg D^{-\epsilon}, \text{ for fixed } \epsilon>0,\] where $\Phi_D(\tau)$ is the proportion of fundamental discriminants with $d\leq D$ for which
\[ L(1,\chi_d)\geq e^{\gamma}\tau.\]
\end{theorem}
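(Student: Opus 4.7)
The proof strategy is a direct application of Theorem $1$ of \cite{GS}, combined with the explicit large-$\tau$ asymptotic for the associated probabilistic model. The GS theorem shows that, for fundamental discriminants $d\leq D$ and for $\tau$ in a wide range, the proportion of such $d$ with $L(1,\chi_d)\geq e^{\gamma}\tau$ satisfies
\[
\Phi_D(\tau) = \Phi(\tau)\bigl(1+o(1)\bigr),
\]
where $\Phi(\tau)$ is the probability that a random Euler product $\prod_p (1-X_p/p)^{-1}$ exceeds $e^{\gamma}\tau$, the $X_p$ being independent random variables modelling the behaviour of quadratic characters at primes. Critically, \cite{GS} also supplies the tail estimate
\[
\Phi(\tau) = \exp\!\Bigl(-\frac{e^{\tau-C_0}}{\tau}\bigl(1+o(1)\bigr)\Bigr)
\]
for an explicit constant $C_0$, valid as $\tau\to\infty$.

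The next step is purely computational: substitute $\tau=\log_2D-2\log_3D$ into the above. A short calculation yields
\[
\frac{e^{\tau-C_0}}{\tau} \asymp \frac{\log D}{(\log_2 D)^{3}},
\]
which is $o(\log D)$. Therefore, for any fixed $\epsilon>0$ and $D$ sufficiently large,
\[
\Phi(\tau) \gg \exp\!\bigl(-\epsilon\log D\bigr) = D^{-\epsilon},
\]
and combining this with the first display produces the claimed lower bound for $\Phi_D(\tau)$.

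The main point requiring care is confirming that the probabilistic model accurately captures $\Phi_D(\tau)$ at a value of $\tau$ as close to its natural ceiling of $\log_2 D$ as the chosen $\log_2D-2\log_3D$. This is precisely the content and the main technical achievement of Theorem $1$ in \cite{GS}, which is proved with uniformity in $\tau$ right up to this range. Consequently no additional analytic work beyond quoting their result and performing the substitution above is needed; the only genuine effort is to verify that $2\log_3D$ is a large enough buffer below $\log_2D$ to keep $\tau$ inside the stated regime of validity, and indeed $2\log_3D\to\infty$, which is what their uniformity requires.
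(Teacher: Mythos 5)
Your proposal is correct and follows essentially the same route as the paper: both quote Theorem $1$ of \cite{GS} together with the tail asymptotic for $\Phi(\tau)$ from Proposition $1$ of \cite{GS}, substitute $\tau=\log_2 D-2\log_3 D$, and observe that the resulting exponent is of size $e^{-C}\log D/(\log_2 D)^3=o(\log D)$, whence $\Phi_D(\tau)\gg D^{-\epsilon}$. The only addition on your side is the (worthwhile) remark that this choice of $\tau$ sits inside the range of uniformity of Theorem $1$ of \cite{GS}, which the paper takes for granted.
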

\begin{proof}
The proof of this result is simply computational. Fix $\epsilon>0$.We take $\tau=\log_2(D)-2\log_3(D)$ in Theorem $1$ of \cite{GS}. By Theorem $1$ and then Proposition $1$ of~\cite{GS} we have (where $C_1$ is given explicitly in Proposition $1$ of~\cite{GS}):
\begin{align*}\Phi_D(\log_2(D) - 2 \log_3(D)) &= \exp\left[-\frac{ e^{\log_2(D) - 2\log_3(D)-C_1}}{(\log_2{D} - 2 \log_3(D))} \left(1+O\left(\frac{1}{\log_2(D)}\right)\right)\right]\left( 1+O\left(\frac{1}{\log_3(D)}\right) \right).
\end{align*}
Now we note that \[ \frac{1}{(\log_2(D)-2\log_3(D))}\left(1+O\left(\frac{1}{\log_2(D)}\right)\right)>\frac{1}{\log_2(D)}, \text{ for } D \text{ sufficiently large}.\] Therefore, 
\begin{align*} \Phi_D(\tau) &>\exp\left(-\frac{e^{-C_1}\log(D)}{(\log_2(D))^3}\right), \\
&> D^{-\epsilon}, \text{ for } D \text{ sufficiently large}. \end{align*}
\end{proof}
Theorem \ref{mod1} ensures that there are, for sufficiently large $D$, at least $D^{1-\epsilon}$ fundamental discriminants $d\leq D$ for which $L(1,\chi_d)\geq e^{\gamma}(\log_2(D)-2\log_3(D))$. This follows from the well established fact that there are $\frac{6}{\pi^2}x+O(x^{\frac{1}{2}+\epsilon})$ fundamental discriminants $d$ with $|d|\leq x$. 
Note that we may write 
\be\label{curlyL} \mathcal{L}_{\chi}=\sum_{p}\frac{\chi(p)}{p}=\sum_{m\geq 1}\frac{\mu(m)}{m}\log{L(m,\chi^m)}=\log{L(1,\chi)}+E(\chi), \ee where \[\sum_{p}\left(\log\left(1-\frac{1}{p}\right)+\frac{1}{p}\right)=-0.315718...\leq E(\chi)\leq \sum_{p}\left(\log\left(1+\frac{1}{p}\right)-\frac{1}{p}\right)=-0.18198...\] 
Therefore for sufficiently large $D$, there are at least $D^{1-\epsilon}$ fundamental discriminants $d\leq D$ for which  $\mathcal{L}_{\chi_d} \geq \log_3(D) +O(1)$. 

In light of a result by Page that ensures there is never more than one conductor $d\leq x$ for which $L(s,\chi_d)$ has a Siegel zero, Theorem \ref{mod1} combines with \eqref{curlyL} to give \eqref{Lchi} as required. 
\begin{corollary}\label{cor:mod1}
Fix $\epsilon>0$. For sufficiently large $D$, there are at least $D^{1-\epsilon}$ fundamental discriminants $d\leq D$ for which  \[\mathcal{L}_{\chi_d} \geq \log_3(D) +O(1),\] 
such that $L(s,\chi_d)$ does not have a Siegel zero.
\end{corollary}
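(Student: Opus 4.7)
The plan is to assemble Corollary \ref{cor:mod1} directly from the three ingredients already in place in the preceding discussion: Theorem \ref{mod1}, the formula \eqref{curlyL} relating $\mathcal{L}_\chi$ to $\log L(1,\chi)$, and Page's theorem on Siegel zeros. The overarching point is that Theorem \ref{mod1} produces an abundant set $\mathcal{S}$ of fundamental discriminants on which $L(1,\chi_d)$ is large, while Page's theorem only has the power to remove at most a single discriminant from $\mathcal{S}$, which is negligible compared to $D^{1-\epsilon}$.

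First I would fix $\epsilon > 0$ and apply Theorem \ref{mod1} with a slightly smaller parameter $\epsilon/2$. Combined with the standard count $\frac{6}{\pi^2} D + O(D^{1/2+\epsilon})$ of fundamental discriminants $d \leq D$, this yields at least $D^{1-\epsilon/2}$ (say) fundamental discriminants $d \leq D$ for which
\[ L(1,\chi_d) \geq e^{\gamma}\bigl(\log_2(D) - 2\log_3(D)\bigr). \]
Taking logarithms and using $\log\bigl(\log_2(D) - 2\log_3(D)\bigr) = \log_3(D) + O(\log_3(D)/\log_2(D)) = \log_3(D) + o(1)$, I would then invoke \eqref{curlyL} to convert this lower bound on $\log L(1,\chi_d)$ into $\mathcal{L}_{\chi_d} \geq \log_3(D) + O(1)$, absorbing the bounded quantity $E(\chi_d)$ into the $O(1)$.

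Next I would remove from this set any discriminant $d$ such that $L(s,\chi_d)$ has a Siegel zero. By Page's theorem there is at most one such $d \leq D$, so the surviving set still has cardinality at least $D^{1-\epsilon/2} - 1$, which exceeds $D^{1-\epsilon}$ for all sufficiently large $D$. This delivers the stated conclusion.

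I do not expect a serious obstacle: the work of Theorem \ref{mod1} does the heavy lifting, Page's theorem is a clean off-the-shelf tool, and the translation between $L(1,\chi_d)$ and $\mathcal{L}_{\chi_d}$ is purely algebraic via \eqref{curlyL}. The only mildly delicate point is bookkeeping of the $\epsilon$, namely applying Theorem \ref{mod1} with $\epsilon/2$ (or any strictly smaller constant) so that the loss from Page's single excluded discriminant is safely absorbed in the final exponent $D^{1-\epsilon}$.
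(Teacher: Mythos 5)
Your proposal is correct and follows the same route as the paper: Theorem \ref{mod1} plus the count of fundamental discriminants gives many $d$ with $L(1,\chi_d)$ large, the identity \eqref{curlyL} converts this to $\mathcal{L}_{\chi_d}\geq \log_3(D)+O(1)$, and Page's theorem removes at most one discriminant. Your explicit $\epsilon/2$ bookkeeping is in fact slightly more careful than the paper's.
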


\subsection{Proof of uniform version of \eqref{DGK} with $L(s,\chi)$ having no Siegel zero}\leavevmode
\vspace{+1ex}
We write 
\begin{align}\label{original} \frac{\#\{ab\leq x:\:\chi(a)=\chi(b)=1\}}{\frac{1}{4}\#\{ab\leq x:\:(ab,d)=1\}}&=\sum_{\substack{ab\leq x \\ a,b\text{ prime}}}(1+\chi(a))(1+\chi(b))\Bigg{/}\sum_{\substack{ab\leq x\\ a,b\text{ prime}}}1, \nonumber\\
&=1+\sum_{\substack{ab\leq x \\ a,b\text{ prime}}}\big(\chi(a)+\chi(b)+\chi(ab)\big)\Bigg{/}\sum_{\substack{ab\leq x\\ a,b\text{ prime}}}1, \nonumber \\
&=1+\frac{\log{x}}{2x(\log\log{x}+O(1))}\sum_{\substack{ab\leq x \\ a,b\text{ prime}}}(\chi(a)+\chi(b)+\chi(ab)),
\end{align}
where we have used the following result due to Landau
\[\sum_{\substack{ab\leq x\\ a,b\text{ prime}}}1=\frac{2x}{\log{x}}\left(\log\log{x}+O(1)\right).\]
Here we count each of the products $ab$ and $ba$ in the sum separately, which accounts for the factor of $2$ used here in contrast with the the usual formulation of this result. 
Let us consider the first sum here. 
\begin{equation}\label{examine} \sum_{\substack{ab\leq x \\ a,b\text{ prime}}}\chi(a)= 
\sum_{\substack{a\leq \sqrt{x}\\ a\text{ prime}}}\chi(a)\sum_{\substack{b\leq x/a\\ b \text{ prime}}}1+\sum_{\substack{b\leq \sqrt{x}\\ b \text{ prime}}}1\sum_{\substack{a\leq x/b\\ a\text{ prime}}}\chi(a)-\sum_{\substack{a\leq \sqrt{x}\\a\text{ prime}}}\chi(a)\cdot\sum_{\substack{b\leq \sqrt{x}\\b \text{ prime}}}1.\end{equation}
To get a useful bound for the terms in this expression it is clearly necessary to understand the quantity \be\label{quant}\sum_{\substack{n\leq x\\n\text{ prime}}}\chi(n).\ee
We now incorporate the notion of a Siegel zero by way of a Theorem from~\cite{DAV}, pg. $95$.
\begin{theorem}[Page, $1935$]\label{thm:page}
If $c$ is a suitable positive constant, there is at most one real primitive character $\chi$ to a modulus $d\leq x$ for which $L(s,\chi)$ has a real zero $\beta$ satisfying 
\be\label{eq:page}\beta>1-\frac{c}{\log{x}}.\ee Such a zero is called a Siegel zero.
\end{theorem}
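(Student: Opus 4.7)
The plan is to argue by contradiction using a positivity argument for the Dedekind zeta function of a biquadratic field. Suppose, to the contrary, that there are two distinct primitive real Dirichlet characters $\chi_1\pmod{q_1}$ and $\chi_2\pmod{q_2}$ with $q_1,q_2\leq x$, each with a real zero $\beta_i$ satisfying $\beta_i > 1 - c/\log x$, for an absolute constant $c>0$ to be chosen sufficiently small at the end.

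The first step is to introduce the auxiliary function
\[
F(s)=\zeta(s)\,L(s,\chi_1)\,L(s,\chi_2)\,L(s,\chi_1\chi_2^*),
\]
where $\chi_1\chi_2^*$ is the primitive character inducing $\chi_1\chi_2$. Up to a finite product of local Euler factors at primes ramifying in the biquadratic extension $K=\SQ(\sqrt{d_1^*},\sqrt{d_2^*})$, with $d_i^*$ the discriminant of $\chi_i$, one has $F(s)=\zeta_K(s)$. Consequently $F(s)=\sum_n a_n/n^s$ has non-negative Dirichlet coefficients with $a_1=1$, is analytic for $\mathrm{Re}(s)>0$ except for a simple pole at $s=1$, and its logarithmic derivative $-F'/F$ has non-negative Dirichlet coefficients on $\mathrm{Re}(s)>1$. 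Moreover $F$ inherits the hypothesised real zeros at $\beta_1$ and $\beta_2$ from the two individual $L$-functions.

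The second step is to invoke the classical Landau-type inequality derived from the Hadamard factorisation of the entire function $(s-1)F(s)$. Taking the logarithmic derivative and evaluating at real $\sigma>1$, the non-negativity of the Dirichlet coefficients of $-F'/F$ yields
\[
\frac{1}{\sigma-1} + C\log q_K \;\geq\; \sum_{\rho}\mathrm{Re}\,\frac{1}{\sigma-\rho},
\]
where the sum runs over non-trivial zeros of $F$, $q_K\ll(q_1q_2)^2\leq x^4$ is the conductor of $\zeta_K$, and the $O(\log q_K)$ term absorbs contributions from the gamma factors and from the usual $\sum_\rho\mathrm{Re}(1/\rho)$ bound. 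Each summand on the right is positive for $\sigma>1$, so restricting to the two real zeros $\beta_1,\beta_2$ only weakens the inequality. Choosing $\sigma=1+2c/\log x$ and using $1-\beta_i<c/\log x$ gives
\[
\frac{\log x}{2c} + C\log x \;\geq\; \frac{2\log x}{3c},
\]
which rearranges to $6Cc \geq 1$. For $c<1/(6C)$ this is violated, yielding the required contradiction.

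The principal obstacle is careful bookkeeping: verifying that the positivity of the Dirichlet coefficients of $-F'/F$ genuinely survives the identification of $F$ with $\zeta_K$ (the local factors at ramified primes only shift the relevant quantities by $O(\log q_K)$), and confirming that the constant $C$ arising from gamma factors and from the zero-density of $F$ in unit horizontal strips is absolute and independent of $q_1,q_2$. Since this is precisely the strategy developed in Davenport \cite{DAV}, I would follow that template closely, tracking only the explicit constants needed to recover the statement in Theorem \ref{thm:page}.
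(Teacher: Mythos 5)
The paper does not actually prove this statement; it is imported verbatim from Davenport \cite{DAV}, p.~95, and your sketch is precisely the classical argument given there: non-negativity of the Dirichlet coefficients of $-F'/F$ for $F(s)=\zeta(s)L(s,\chi_1)L(s,\chi_2)L(s,\chi_1\chi_2)$ (which requires $\chi_1\neq\chi_2$ so that $\chi_1\chi_2$ is non-principal and $F$ has only a simple pole), combined with the Hadamard--Landau inequality and the choice $\sigma=1+2c/\log x$, and your arithmetic leading to $6Cc\geq 1$ checks out. The outline is correct as it stands; the only simplification worth noting is that the positivity follows immediately from $1+\chi_1(n)+\chi_2(n)+\chi_1(n)\chi_2(n)=(1+\chi_1(n))(1+\chi_2(n))\geq 0$ for real characters, so the identification of $F$ with $\zeta_K$ and the bookkeeping at ramified primes can be dispensed with entirely.
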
 
Davenport also provides us with the next step. In Chapter $14$ it is shown that, restricting the conductor of our character $\chi_d$ by $d\leq \exp[C(\log{x})^{\frac{1}{2}}]$ and assuming that the corresponding $L$-function has no zero lying in the region defined by \eqref{eq:page}, we have 
\be\label{eq:psi} \psi(x,\chi)\ll \frac{x}{e^{c(\log{x})^\frac{1}{2}}}, \ee where $c$ is some positive constant. Since we are only interested in summing over the primes and not prime powers, we need the following deduction from \eqref{eq:psi} \be\label{eq:theta} \theta(x,\chi)\ll \frac{x}{e^{c(\log{x})^{\frac{1}{2}}}}\ll\frac{x}{(\log{x})^{N}},\ee for any given $N>0$ where the implicit constant depends only on $N$. Here we have used that the contribution of primes powers in \eqref{eq:psi} is $\ll x^{1/2}$. This quantity turns out to be crucial since we will need to bound the sum of $\chi_d(p)$ over the primes. We proceed using partial summation but must now be careful as to how we bound $\theta(t,\chi)$. For $d<\exp[c(\log{t})^{1/2}]$ or equivalently $t>\exp[(\frac{1}{c}\log{d})^2]$, we use \eqref{eq:theta}. 
If $t<\exp[(\frac{1}{c}\log{d})^{2}]=:x_d$ then we use the trivial bound \[ \theta(t,\chi)\ll t.\] With this consideration and on using partial summation, \eqref{quant} becomes 
\begin{align}\label{long}
\sum_{\substack{n\leq x\\ n\text{ prime}}}\chi(n)&\ll\sum_{\substack{n\leq x_d\\n\text{ prime}}}\chi(n)+\int_{x_d}^{x}\frac{1}{\log{t}}\,d\theta(t,\chi),\nonumber \\
&\ll x_d+ \left[\frac{t}{e^{c(\log{t})^{\frac{1}{2}}}\log{t}}\right]^{x}_{x_d}+\int_{x_d}^{x}\frac{1}{(\log{t})^{N+2}}\, dt, \nonumber \\
&\ll e^{(\frac{1}{c}\log{d})^2}+\frac{x}{(\log{x})^{N+1}},
\end{align}
for $N>0$. Here we have used the fact that $|\chi(n)|=1$, \eqref{eq:theta}, and the relation \be Li_N(x)=\frac{x}{(\log{x})^N}+O\left(\frac{x}{(\log{x})^{N+1}}\right). \ee
Now, applying the constraint \[ d\leq \exp[c'(\log{x})^{1/2}],\] where we choose $0<c'<c$, the first term in \eqref{long} can be bounded as follows. 
 \begin{align*} e^{(\frac{1}{c}\log{d})^2}&\ll \frac{x}{(\log{x})^{N}}\text{ for } N>0.\end{align*}
 The result of this is that 
 \be\label{nobadres} \sum_{\substack{n\leq x\\ n\text{ prime}}}\chi(n)
\ll \frac{x}{(\log{x})^{N}},  \ee for any given $N>0$, where again the implicit constant depends only on $N$.
Using \eqref{nobadres}, we may bound the second term of \eqref{examine} as follows. 
\[\sum_{\substack{b\leq\sqrt{x}\\b\text{ prime}}}1\sum_{\substack{a\leq x/b\\ a \text{ prime}}}\chi(a)\ll \frac{x}{(\log{x})^{N+1}}\sum_{\substack{b\leq\sqrt{x}\\b\text{ prime}}}\frac{1}{b}\ll \frac{x}{(\log{x})^{N}}, \text{ where }N>1.\]
Using The Prime Number Theorem, the third term of \eqref{examine} can be similarly bounded. \[ \sum_{\substack{a\leq \sqrt{x}\\a\text{ prime}}}\chi(a)\cdot\sum_{\substack{b\leq \sqrt{x}\\b \text{ prime}}}1 \ll \frac{x^{1/2}}{(\log{x})^{N}}\cdot \frac{x^{1/2}}{\log{x}}\ll \frac{x}{(\log{x})^{N+1}}.\] 
The first term may be written as 
\begin{align}\label{last1} 
\sum_{\substack{a\leq \sqrt{x}\\ a\text{ prime}}}\chi(a)\sum_{\substack{b\leq x/a\\ b \text{ prime}}}1&
&=\sum_{\substack{a\leq\sqrt{x}\\a \text{ prime}}}\chi(a)Li(x/a)+O\left(\frac{x}{(\log{x})^{N}}\right), \:\:\text{ where we take }N>0. \end{align} 
Here we have used the following result, proved by La Vallée Poussin in 1899. \[\pi(x)=Li(x)+O\left(e^{-a(\log{x})^{1/2}}\right), \] for some positive constant $a$. These techniques, by symmetry, yield the same result for the second sum of \eqref{original}. Summarising, we have shown that 
\[\sum_{\substack{ab\leq x\\a,b \text{ prime}}}(\chi(a)+\chi(b))=2\sum_{\substack{a\leq\sqrt{x}\\a \text{ prime}}}\chi(a)Li(x/a)+O\left(\frac{x}{(\log{x})^{N}}\right), \:\:\text{ where we take }N>0.\]
We now deal with the remaining term in \eqref{original}. Namely 
\[\sum_{\substack{ab\leq x\\ a,b\text{ prime}}}\chi(ab)=\sum_{\substack{a\leq \sqrt{x}\\a\text{ prime}}}\chi(a)\sum_{\substack{b\leq x/a\\a \text{ prime}}}\chi(b)+\sum_{\substack{b\leq \sqrt{x}\\b\text{ prime}}}\chi(b)\sum_{\substack{a\leq x/b\\a\text{ prime}}}\chi(a)-\sum_{\substack{a\leq\sqrt{x}\\a \text{ prime}}}\chi(a)\cdot\sum_{\substack{b\leq \sqrt{x}\\b \text{ prime}}}\chi(b).\]
Applying \eqref{nobadres} to each the three terms allows us to bound them by $x/(\log{x})^N$ with $N>1$. Indeed, the first term becomes \[\sum_{\substack{a\leq \sqrt{x}\\a\text{ prime}}}\chi(a)\sum_{\substack{b\leq x/a\\a \text{ prime}}}\chi(b)=\frac{x}{(\log{x})^{N+1}}\sum_{\substack{a\leq\sqrt{x}\\a\text{ prime}}}\frac{1}{a}\ll\frac{x}{(\log{x})^{N}},\text{ where }N>0.\]
Simply interchanging $a$ and $b$ yields the same bound for the second term and the third term is dealt with simply by taking the product of the two sums after applying \eqref{nobadres}. \\\\
Thus, we have for $N>0$, \begin{align*} \frac{\#\{ab\leq x:\:\chi(a)=\chi(b)=1\}}{\frac{1}{4}\#\{ab\leq x:\:(ab,d)=1\}} = 1+\frac{\log{x}}{x(\log\log{x}+O(1))}&\sum_{\substack{a\leq\sqrt{x}\\a\text{ prime}}}\chi(a)Li(x/a)\\&+O\left(\frac{1}{(\log\log{x}+O(1))(\log{x})^{N}}\right). \end{align*}
Now let us examine 
\be\label{pen}\sum_{\substack{a\leq\sqrt{x}\\a\text{ prime}}}\chi(a)Li(x/a) = \sum_{\substack{a\leq\sqrt{x}\\a\text{ prime}}}\frac{x\chi(a)}{a\log(x/a)}+O\left(x\sum_{\substack{a\leq\sqrt{x}\\a\text{ prime}}}\frac{\chi(a)}{a(\log(x/a))^{2}}\right). \ee
The last term can be written
\begin{align*}\label{sameagain} \left|x\sum_{\substack{a\leq \sqrt{x}\\a\text{ prime}}}\frac{\chi(a)}{a(\log(x/a))^2}\right|
&\ll \frac{x}{(\log{x})^2}\sum_{\substack{a\leq \sqrt{x}\\a\text{ prime}}}\frac{1}{a}\:\:\:\text{ using } |\chi(a)|\leq 1,\nonumber \\
&\ll \frac{x\log\log{x}}{(\log{x})^2}, \nonumber\\ \end{align*}
where we have used the following result 
\[ \sum_{\substack{a\leq{x}\\a\text{ prime}}}\frac{1}{a}=\log\log{x}+O(1).\]
We may also change the summand of the first term, only incurring a small error term so that it becomes \[\sum_{\substack{a\leq\sqrt{x}\\a\text{ prime}}}\frac{x\chi(a)}{a\log{x}},\]
where we have used the result
\[ \sum_{\substack{p\leq x \\ p\text{ prime}}}\frac{\log{p}}{p}=\log{x}+O(1).\]
The difference between this sum and the original can be bounded by \[\sum_{\substack{a\leq \sqrt{x} \\ a\text{ prime}}}\frac{x\log{a}}{a(\log{x})^2}\ll \frac{x}{\log{x}},\] which represents the error incurred in changing the summand of the first term in \eqref{pen}.
The result of this is that
\be\label{eq:pen}\sum_{\substack{a\leq\sqrt{x}\\a\text{ prime}}}\chi(a)Li(x/a)  =\frac{x}{\log{x}}\sum_{a \text{ prime}}\frac{\chi(a)}{a}-\frac{x}{\log{x}}\sum_{\substack{a>\sqrt{x}\\a\text{ prime}}}\frac{\chi(a)}{a}+O\left(\frac{x}{\log{x}}\right).\ee
Applying \eqref{nobadres} to the second term here, \eqref{pen} becomes 
\[ \sum_{\substack{a\leq\sqrt{x}\\a\text{ prime}}}\chi(a)Li(x/a) = \frac{x}{\log{x}}\left(\mathcal{L}_{\chi}+O(1)\right).\] 
The above calculations imply that, for large $x$ and $d\leq \exp[C(\log{x})^{1/2}]$ where $L(s,\chi_d)$ has no `exceptional' zero, 
\begin{equation}\label{step0}  \frac{\#\{ab\leq x:\:\chi(a)=\chi(b)=1\}}{\frac{1}{4}\#\{ab\leq x:\:(ab,d)=1\}} = 1+\frac{\mathcal{L_\chi}+O(1)}{\log\log{x}(1+O(1/\log\log{x})},\end{equation}
as required. 
Now, denoting $b=-O(1/\log\log{x})$, we have 
\[ \frac{1}{1+O(1/\log\log{x})}=\frac{1}{1-b}=1+b+b^2+...\] For large $x$, $b<1/2$ and \[1+b+b^2+...<1+2b=1+O(b).\] Thus, \[\frac{1}{1+O(1/\log\log{x})}=1+O(1/\log\log{x}),\] and \eqref{step0} becomes 
\begin{align}\label{step}  \frac{\#\{ab\leq x:\:\chi(a)=\chi(b)=1\}}{\frac{1}{4}\#\{ab\leq x:\:(ab,d)=1\}} &= 1+\frac{\mathcal{L_\chi}+O(1)+O(\mathcal{L}_{\chi}/\log\log{x})}{\log\log{x}}, \nonumber \\ &= 1+\frac{\mathcal{L_\chi}+O(1)}{\log\log{x}},
\end{align} as required, where we have used \eqref{curlyL} and that $L(1,\chi)<c\log{x}$ for some positive constant $c$.

\begin{proof}(of Theorem \ref{hough1})
Defining $D(x)=\exp[C(\log{x})^{1/2}]$, Corollary \ref{cor:mod1} gives us that there are at least $D(x)^{1-\epsilon}$ integers $d\leq D(x)$ such that the corresponding Dirichlet character $\chi_d$ is real and \[\mathcal{L}_{\chi_d}\geq \log_3(D(x))+O(1),\] where $L(s,\chi_d)$ had no Seigel zero. In light of \eqref{step}, we have at least $D(x)^{1-\epsilon}$ integers $d\leq D(x)$ for which $\chi_d$ is a real Dirichlet character and \[ \frac{\#\{ab\leq x:\:\chi(a)=\chi(b)=1\}}{\frac{1}{4}\#\{ab\leq x:\:(ab,d)=1\}} \text{ is at least as large as }1+\frac{\log\log\log{x}+O(1)}{\log\log{x}}.\] 
\end{proof}

\section{Proof Of Theorem \ref{thm:Hough2}} 
We begin by showing that \eqref{step} can be obtained, for large $x$, on the range $d\leq x$ if we assume the Generalised Riemann Hypothesis (GRH). The crucial advantage in making this assumption is that it allows us to write \[ \theta(x,\chi)\ll x^{1/2}(\log(dx))^2,\] a much stronger bound than \eqref{eq:theta}. This bound can be found in~\cite{DAV} (pg. $125$). We thus have the corresponding bound, obtained using partial summation, \be \sum_{\substack{n\leq x \\ n \text{ prime }}}\chi(n)\ll x^{1/2}(\log{x}), \text{ for }d\leq x.\ee
The procedure for arriving at \eqref{step} is then much the same as in the proof of Theorem \ref{hough1}. The only notable difference is in bounding the first term of \eqref{examine} where we use the following result of Helge Von Koch ($1901$) that assumes the Riemann Hypothesis. 
\[ \pi(x)=Li(x)+O(\sqrt{x}\log{x}).\] The next step is to note that Theorem \ref{mod1} gives us that, for large $x$, there exists $d\leq x$ such that 
\[ e^{\gamma}(\log_2{x}-2\log_3{x})\leq L(1,\chi_d),\] Also, Theorem \ref{Littlewood} ensures that \[ L(1,\chi_d)\stackrel{\text{GRH}}{\leq} (2e^{\gamma}+o(1))\log\log{x}, \:\:\:\text{ for any } d\leq x.\] In light of \eqref{curlyL}, we thus have 
\begin{align}\label{switch} \max_{d\leq x}\mathcal{L}_{\chi_d}&\sim\log\log\log{x}+O(1).
 \end{align} Now, combining \eqref{switch} with \eqref{step}, we have that \[\max_{d\leq x}\left|\frac{\#\{ab\leq x: \chi_d(a)=\chi_d(b)=1\}}{\frac{1}{4}\#\{ab\leq x: (ab,d)=1\}}\right| \sim1+\frac{\log\log\log{x}}{\log\log{x}},\] and Theorem \ref{thm:Hough2} follows.

 \section{Proof Of Theorem \ref{thm:Hough3}} 
 Following exactly the same steps as seen in the proof of Theorem \ref{hough1}, we have that for large $x$ and $d\leq\exp[C(\log{x})^{1/2}]$, where $L(s,\chi_d)$ has no Siegel zero, \begin{equation}\label{step2}  \frac{\#\{ab\leq x;\:\chi(a)=\chi(b)=1\}}{\frac{1}{4}\#\{ab\leq x;\:(ab,d)=1\}} = 1+\frac{\mathcal{L_\chi}+O(1)}{\log\log{x}}.\end{equation}
Now we use a second application of Theorem $1$ from~\cite{GS}.
\begin{theorem}[Application Of Theorem $1$ from~\cite{GS}]\label{mod2}
Fix $\epsilon>0$. For sufficiently large $D$, there are at least $D^{1-\epsilon}$ fundamental discriminants $d\leq D$ for which \[\mathcal{L}_{\chi_d}\leq -\log_3(D)+O(1),\] such that $L(s,\chi_d)$ does not have a Seigel zero.
\end{theorem}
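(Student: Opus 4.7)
The plan is to argue in close parallel with the proof of Theorem \ref{mod1}, now focusing on the lower tail of the distribution of $L(1,\chi_d)$. Theorem $1$ of \cite{GS} supplies estimates not only for unusually large but also for unusually small values of $L(1,\chi_d)$: it produces a corresponding proportion function, say $\Psi_D(\tau)$, measuring the density of fundamental discriminants $|d|\le D$ for which $L(1,\chi_d)\le \zeta(2)/(e^{\gamma}\tau)$. Setting $\tau=\log_2(D)-2\log_3(D)$, the same computational manipulation carried out for $\Phi_D$ in the proof of Theorem \ref{mod1} gives $\Psi_D(\tau)\gg D^{-\epsilon}$ for any fixed $\epsilon>0$ once $D$ is sufficiently large.

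Combined with the standard count $\frac{6}{\pi^2}D+O(D^{1/2+\epsilon})$ of fundamental discriminants with $|d|\le D$, this produces at least $D^{1-\epsilon}$ fundamental discriminants $d\le D$ satisfying $L(1,\chi_d)\ll 1/\log_2(D)$. Substituting into the decomposition \eqref{curlyL}, namely $\mathcal{L}_{\chi_d}=\log L(1,\chi_d)+E(\chi_d)$ with $E(\chi_d)$ uniformly bounded, converts this into $\mathcal{L}_{\chi_d}\le -\log_3(D)+O(1)$ as required.

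Finally, Page's Theorem \ref{thm:page} guarantees that at most one conductor $d\le D$ corresponds to a character whose $L$-function has a Siegel zero. Removing this single possible exceptional discriminant from the collection above reduces the count by at most $1$, which is asymptotically negligible against $D^{1-\epsilon}$, so the stated conclusion survives intact.

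The main obstacle is conceptual rather than computational: small values of $L(1,\chi_d)$ are precisely the phenomenon that Siegel zeros would produce, so one has to rely on the probabilistic lower-tail estimate of \cite{GS} to ensure that $D^{1-\epsilon}$ characters attain small $L(1,\chi_d)$ through genuine biased cancellation in $\sum_p \chi_d(p)/p$ rather than through a putative exceptional zero. Once that input is granted, the Page removal step is essentially cosmetic, and the rest is an arithmetic translation identical in structure to the one already performed for Theorem \ref{mod1}.
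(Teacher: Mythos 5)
Your proposal is correct and follows essentially the same route as the paper: the lower-tail estimate $\Psi_D(\tau)\gg D^{-\epsilon}$ from Theorem $1$ of \cite{GS} with $\tau=\log_2(D)-2\log_3(D)$ and threshold $\zeta(2)/(e^{\gamma}\tau)=\pi^2/(6e^{\gamma}\tau)$, conversion to $\mathcal{L}_{\chi_d}\leq-\log_3(D)+O(1)$ via \eqref{curlyL}, and removal of the at most one Siegel-zero modulus by Page's theorem. Your closing remark about why the Page step is harmless is a nice touch the paper leaves implicit, but the argument is the same.
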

\begin{proof}(of Theorem \ref{mod2})
The proof of this result follows is much the same way as in the proof of Theorem \ref{mod1}. Again, taking $\tau=\log_2(D)-2\log_3(D)$ in Theorem $1$ of~\cite{GS}, we have that for fixed $\epsilon$ and sufficiently large $D$, \[\Psi_D(\tau)\gg D^{-\epsilon},\] where $\Psi_D(\tau)$ is the proportion of fundamental discriminants with $d\leq D$ for which \[L(1,\chi_d)\leq \frac{\pi^2}{6e^{\gamma}\tau}.\]Now recall the result of Page which ensures that there is never more than one conductor $d\leq D$ for which $L(s,\chi_d)$ has a Siegel zero. Finally, using \eqref{curlyL} we have that for sufficiently large $D$, there are at least $D^{1-\epsilon}$ fundamental discriminants $d\leq D$ such that \[\mathcal{L}_{\chi_d}\leq -\log_3(D)+O(1).\]

 \end{proof}

\begin{proof}(of Theorem \ref{thm:Hough3})
Defining $D(x)=\exp[C(\log{x})^{1/2}]$, Theorem \ref{mod2} gives us that there are at least $D(x)^{1-\epsilon}$ integers $d\leq D(x)$ such that the corresponding Dirichlet character $\chi_d$ is real and \[ \mathcal{L}_{\chi_d}\leq -\log_3(D(x))+O(1),\] where $L(s,\chi_d)$ had no Siegel zero.  In light of \eqref{step}, we have at least $D(x)^{1-\epsilon}$ integers $d\leq D(x)$ for which $\chi_d$ is a real Dirichlet character and \[ \frac{\#\{ab\leq x:\:\chi(a)=\chi(b)=1\}}{\frac{1}{4}\#\{ab\leq x:\:(ab,d)=1\}} \text{ is at least as small as }1-\frac{\log\log\log{x}+O(1)}{\log\log{x}}.\] 
\end{proof}

 \section{Computational Aspects} 
It was Euler in 1772 who first came across what became `Euler's Polynomial' \[ x^2+x+41.\] 
What is so remarkable is that for the integers $0\leq x\leq 39$, the value of $x^2+x+41$ is prime. If we now consider the general quadratic 
\[f(x) = ax^2+bx+c,\]  then, for a given integer $x$, the value of this quadratic is divisible by a prime $p$ if and only if 
\[ax^2+bx+c\equiv 0 \mod{p},\]
which is equivalent to saying that \[(2ax+b)^2\equiv b^2-4ac \mod{p}. \] Said differently $\left(\frac{\Delta(f)}{p}\right)=0 \text{ or } 1$. This suggests that if a given quadratic produces a high density of primes for integers $x\leq m$, then the character corresponding to $\chi(n)=\left(\frac{\Delta(f)}{n}\right)$ might produce a high density of $-1$ values for primes $n\leq m$. 
We now turn to an article by Jacobson and Williams ~\cite{JW}. Their work is based upon a conjecture of Hardy and Littlewood, namely `Conjecture F'. This implies that, for a polynomial of the form $f(x)=x^2+x+A$, $A\in \mathbb{Z}$ with discriminant $\Delta$, the asymptotic density of prime values of $f$ is related to a quantity $C(\Delta)$. They also suggest that the larger the value of $C(\Delta)$, the higher the asymptotic density of primes for \emph{any} polynomial of discriminant $\Delta$. We thus restrict to polynomials of the form $f(x)=x^2+x+A$. If we also denote by $P_A(n)$, the number of primes produced by $f_A(x)$ for $0\leq x\leq n$ then Conjecture $F$ can be written as follows.
\begin{conjecture}[Conjecture F (simplified), Hardy and Littlewood, $1923$]
\[ P_A(n)\sim C(\Delta)L_A(n),\] where \[L_A(n)=2\int_{0}^{n}\frac{dx}{\log{f_A(x)}}\,, \] and \[C(\Delta)=\prod_{p\geq 3}{1-\frac{\left(\frac{\Delta}{p}\right)}{p-1}}.\]
\end{conjecture}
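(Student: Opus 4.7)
The statement in question, Conjecture F, is a famous open problem of Hardy and Littlewood, so my plan is to sketch the heuristic derivation that leads to the asymptotic rather than a rigorous argument, and to flag that the last step is obstructed by the parity problem. The strategy is to isolate the singular series $C(\Delta)$ via a local computation at each odd prime $p$, and then to combine this with the prime number theorem applied at the scale $f_A(x)$.

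First, I would fix an odd prime $p$ and compute the proportion of residues $x\pmod p$ for which $p\nmid f_A(x)$. Completing the square gives $4f_A(x)=(2x+1)^2-\Delta$, so $f_A(x)\equiv 0\pmod p$ has exactly $1+\left(\frac{\Delta}{p}\right)$ solutions when $p\nmid\Delta$ and a single solution when $p\mid\Delta$. Comparing this local density with the ``random integer'' baseline, where a random integer is coprime to $p$ with probability $1-1/p$, produces the local correction factor
\[\frac{1-\bigl(1+\left(\frac{\Delta}{p}\right)\bigr)/p}{1-1/p}=1-\frac{\left(\frac{\Delta}{p}\right)}{p-1},\]
which is exactly the factor appearing in the Euler product defining $C(\Delta)$. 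Multiplying over all odd primes, and absorbing the trivial behaviour at $p=2$ into the normalization, reproduces the singular series.

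Second, I would invoke the standard heuristic that local conditions at distinct primes are independent of one another and of the global primality condition provided by PNT, so that the probability that $f_A(x)$ is prime should be $C(\Delta)/\log f_A(x)$. Summing over $0\leq x\leq n$ and approximating the sum by the corresponding integral yields
\[P_A(n)\sim C(\Delta)\int_0^n\frac{dx}{\log f_A(x)}.\]
The factor of $2$ appearing in the definition of $L_A(n)$ reflects Hardy and Littlewood's normalization and is connected to the symmetry $f_A(x)=f_A(-x-1)$, under which each value of $f_A$ is attained twice.

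The hard step is of course the passage from heuristic to theorem. An upper bound of the correct order of magnitude, $P_A(n)\ll C(\Delta)L_A(n)$, is already accessible via the Selberg upper-bound sieve, but the matching lower bound is obstructed by the parity problem, and this is precisely why Conjecture F --- and, more generally, the Bateman--Horn conjecture for polynomials of degree $\geq 2$ --- remains open. Indeed, even the qualitative statement that $P_A(n)\to\infty$, i.e.\ that every irreducible quadratic represents infinitely many primes, is currently unknown, so a full proof of Conjecture F would require an advance well beyond the analytic techniques used elsewhere in this paper; for the computational application in the next section it is enough to treat $C(\Delta)$ as the operative heuristic predictor of prime yield.
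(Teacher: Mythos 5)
This statement is Hardy and Littlewood's Conjecture F, which the paper states without proof---it is a famous open conjecture, invoked only as a heuristic guide for the computational search that follows---so there is no proof in the paper to compare against. You correctly recognize this, and your heuristic derivation is the standard one and is essentially sound: the local count of roots of $f_A(x)\equiv 0\pmod{p}$ via $4f_A(x)=(2x+1)^2-\Delta$ is right, the resulting Euler factor $1-\left(\frac{\Delta}{p}\right)/(p-1)$ matches the definition of $C(\Delta)$, and your remarks on the Selberg sieve upper bound and the parity obstruction are accurate. One detail is off: the factor of $2$ in $L_A(n)$ does not come from the symmetry $f_A(x)=f_A(-x-1)$, which maps $[0,n]$ to $[-n-1,-1]$ and so causes no double counting in $P_A(n)$; it is precisely the local factor at $p=2$ that you proposed to ``absorb into the normalization.'' Since $x^2+x$ is always even, $f_A(x)\equiv A\pmod{2}$ is always odd for odd $A$, which doubles the expected density of primes relative to a random integer of the same size and accounts exactly for the $2$ (this is Hardy--Littlewood's $\epsilon=2$ for $a+b$ even). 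This is a misattribution within a heuristic rather than a gap, but it is worth correcting.
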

Jacobson and Williams predict that the polynomial $x^2+x+A$ `has the highest asymptotic density of prime values for any polynomial of this type currently know', where $A$ is given by 
\begin{equation}\label{long1} -33251810980696878103150085257129508857312847751498190349983874538507313.\end{equation}
We would thus expect the character with conductor $d$ given by $\chi(n)=(d/n)$ where $d=1-4A$  to yield a low value of $\mathcal{L}_{\chi_d}$. Firstly, we note that $d\equiv 1\mod{4}$ is square-free and thus $(d/n)$ defines a real primitive character. A simple piece of SageMath yields 
\[\mathcal{L}_{(d/.)}\approx -2.1108.\]
Furthermore, the data confirms this bias. 
Defining \[r_d(x):=\frac{\#\{pq\leq x:\:\chi_d(p)=\chi_d(q)=\eta\}}{\frac{1}{4}\#\{pq\leq x: \: (pq,d)=1\}},\] where $d$ is the conductor of the character $\chi_d$, we have that, on taking $d=1-4A$ where $A$ is given by \eqref{long1} and $\eta=-1$,
\[ r_d(10^3)\approx 3.847,\:\:r_d(10^4)\approx 2.974,\:\: r_d(10^5)\approx 2.394,\:\: r_d(10^6)\approx 2.067. \]
This is the largest bias calculated for such primes races amongst products of two primes. In light of \eqref{curlyL}, it is no surprise that this conductor also gives rise to what we believe to be the lowest known value of $L(1,\chi_d)$, where $\chi_d$ is a general Dirichlet character of modulus $d$. \[L(1,\chi_d)=0.144.\]

\end{document}